\newcommand{\raisecomma}{\raisebox{2pt}{$,$}}
\newcommand{\raisedot}{\raisebox{2pt}{$.$}}
\begin{document}
\title*{Fast Computation of Bernoulli, Tangent\\ and Secant Numbers}
\titlerunning{Fast Computation of Bernoulli, Tangent and Secant Numbers}
\author{Richard P. Brent and David Harvey}
\institute{Richard P. Brent \at 
Mathematical Sciences Institute,
Australian National University, Canberra, ACT 0200, Australia.
\email{Tangent@rpbrent.com}
\and David Harvey \at
School of Mathematics and Statistics,
University of New South Wales, Sydney, NSW 2052,\\ Australia.
\email{D.Harvey@unsw.edu.au}
}
\maketitle

\abstract*{We 
consider the computation of Bernoulli, Tangent (zag), and Secant (zig or
Euler) numbers. In particular, we give asymptotically fast algorithms for
computing the first $n$ such numbers in $O(n^2(\log n)^{2+o(1)})$
bit-operations. We also give very short in-place algorithms for computing
the first $n$ Tangent or Secant numbers in $O(n^2)$ integer operations. 
These algorithms are extremely simple, and fast for moderate values of $n$.
They are faster and use less space than the algorithms of Atkinson (for 
Tangent and Secant numbers) and Akiyama and Tanigawa (for Bernoulli numbers).
}

\abstract{We 
consider the computation of Bernoulli, Tangent (zag), and Secant (zig or
Euler) numbers. In particular, we give asymptotically fast algorithms for
computing the first $n$ such numbers in $O(n^2(\log n)^{2+o(1)})$
bit-operations. We also give very short in-place algorithms for computing
the first $n$ Tangent or Secant numbers in $O(n^2)$ integer operations. 
These algorithms are extremely simple, and fast for moderate values of $n$.
They are faster and use less space than the algorithms of Atkinson (for 
Tangent and Secant numbers) and Akiyama and Tanigawa (for Bernoulli numbers).
}

\section{Introduction}
\label{sec:intro}

The {\em Bernoulli numbers} are rational numbers $B_n$ 
defined by the generating function 
\begin{equation}
\sum_{n \ge 0} B_n\, \frac{z^n}{n!} = \frac{z}{\exp(z) - 1}\,\raisedot
\label{eq:Bgenfn}
\end{equation}
Bernoulli numbers 
are of interest in number theory and are related to special values of
the Riemann zeta function (see \S\ref{sec:Bernoulli}). 
They also occur as coefficients in the
Euler-Maclaurin formula, so are relevant to high-precision computation
of special functions~\cite[\S4.5]{MCA}.

\pagebreak[4]
It is sometimes convenient to consider {\em scaled} Bernoulli numbers
\begin{equation} 
C_n = \frac{B_{2n}}{(2n)!}\,\raisecomma 
\label{eq:Cn} 
\end{equation}
with generating function
\begin{equation}
\sum_{n \ge 0} C_n\,z^{2n} = \frac{z/2}{\tanh(z/2)}\,\raisedot	
\label{eq:genC}
\end{equation}
The generating functions~(\ref{eq:Bgenfn}) and~(\ref{eq:genC})
only differ by the single term $B_1 z$, since the other odd terms vanish.

The {\em Tangent numbers} $T_n$,
and {\em Secant numbers} $S_n$, are defined by
\begin{equation}
\sum_{n > 0} T_n\, \frac{z^{2n-1}}{(2n-1)!} = \tan z,\;\;\;\;
\sum_{n \ge 0} S_n\, \frac{z^{2n}}{(2n)!} = \sec z\,.
\label{eq:TSgenfn}
\end{equation}

In this paper, which is based on an a talk given by the first author
at a workshop held to mark Jonathan Borwein's sixtieth birthday, 
we consider some algorithms for computing Bernoulli, Tangent and Secant
numbers. For background, combinatorial interpretations, and 
references, see
Abramowitz and Stegun~\cite[Ch.\ 23]{AS} (where the notation differs from
ours, e.g.\ $(-1)^nE_{2n}$ is used for our $S_n$), and
Sloane's~\cite{Sloane} sequences A000367, A000182, A000364.
 
Let $M(n)$ be the number of bit-operations required for $n$-bit integer
multiplication.  The Sch\"onhage-Strassen algorithm~\cite{SS} gives
$M(n) = O(n \log n \log \log n)$, 
and F\"urer~\cite{Furer} has recently given an improved bound
$M(n) = O(n (\log n) 2^{\log^{*}n})$.
For simplicity we merely assume that $M(n) = O(n (\log n)^{1 + o(1)})$,
where the $o(1)$ term depends on the precise algorithm used
for multiplication.  For example, if the Sch\"onhage-Strassen algorithm
is used, then the $o(1)$ term can be replaced by
$\log\log\log n/\log\log n$.

In \S\S\ref{sec:Bernoulli}--\ref{sec:tangent} we mention some relevant
and generally well-known facts concerning Bernoulli, Tangent and Secant
numbers.

Recently, Harvey~\cite{Harvey10a}
showed that the {\em single} number $B_n$ can be computed in
$O(n^2(\log n)^{2 + o(1)})$ bit-operations using a modular algorithm.
In this paper we show that {\em all}
the Bernoulli numbers $B_0, \ldots,
B_n$ can be computed with the same complexity bound (and similarly for
Secant and Tangent numbers).

In \S\ref{sec:fast} we give a relatively simple algorithm 
that achieves the slightly weaker bound
$O(n^2(\log n)^{3 + o(1)})$.
In \S\ref{sec:faster} we describe the improvement to
$O(n^2(\log n)^{2 + o(1)})$.
The idea is similar to that espoused by Steel~\cite{Steel},
although we reduce the problem to division rather than multiplication.
It is an open question whether the {\em single} number $B_{2n}$ can
be computed in $o(n^2)$ bit-operations.

In \S\ref{sec:three_term} we
give very short in-place algorithms for computing the first $n$
Secant or Tangent numbers using $O(n^2)$ integer operations.  
These algorithms are extremely simple, and fast
for moderate values of~$n$ (say $n \le 1000$), although 
asymptotically not as fast as the algorithms given
in \S\S\ref{sec:fast}--\ref{sec:faster}.
Bernoulli numbers can easily
be deduced from the corresponding Tangent numbers using the relation
(\ref{eq:TkB2k}) below.

\section{Bernoulli Numbers}
\label{sec:Bernoulli}
From the generating function (\ref{eq:Bgenfn})
it is easy to see that the $B_n$ are rational numbers, 
with $B_{2n+1} = 0$ if $n > 0$.
The first few nonzero $B_n$ are:
$B_0 = 1$, $B_1 = -1/2$, $B_2 = 1/6$, $B_4 = -1/30$,
$B_6 = 1/42$, $B_8 = -1/30$, $B_{10} = 5/66$,
$B_{12} = -691/2730$, $B_{14} = 7/6$.

The denominators of the Bernoulli numbers are given by the
{\em Von Staudt~-- Clausen Theorem}~\cite{Clausen,Staudt},
which states that
\begin{equation*} B'_{2n} := 
B_{2n} \; + \sum_{(p-1)|2n} \frac{1}{p} \; \in {\mathbb Z}\,.\end{equation*}
Here the sum is over all primes $p$ for which $p-1$ divides $2n$.

Since the ``correction'' $B'_{2n} - B_{2n}$ is easy to compute,
it might be convenient in a program to store the integers $B'_{2n}$
instead of the rational numbers $B_{2n}$ or $C_{n}$.

Euler found that the Riemann zeta-function for even
non-negative integer arguments can be
expressed in terms of 
Bernoulli numbers~-- the relation is
\index{Riemann zeta-function!Bernoulli numbers}
\begin{equation}
(-1)^{n-1}\frac{B_{2n}}{(2n)!} = \frac{2\zeta(2n)}{(2\pi)^{2n}}\,\raisedot
\label{eq:B2k_zeta}
\end{equation}
Since $\zeta(2n) = 1 + O(4^{-n})$ as $n \rightarrow +\infty$, we see
that
\begin{equation*} 
|B_{2n}| \sim \frac{2\,(2n)!}{(2\pi)^{2n}}\,\raisedot
\end{equation*}
From Stirling's approximation to $(2n)!$, the number of bits
in the integer part of $B_{2n}$ is $2n\lg n + O(n)$ (we write
$\lg$ for $\log_2$).
Thus, it takes $\Omega(n^2 \log n)$ space to store $B_1, \ldots, B_n$.
We can not expect any algorithm to compute $B_1, \ldots, B_n$ in fewer 
than $\Omega(n^2 \log n)$ bit-operations.

Another connection between the Bernoulli numbers and the Riemann
zeta-function is the identity
\begin{equation}
\frac{B_{n+1}}{n+1} = -\zeta(-n)
\label{eq:Bn1_zeta}
\end{equation}
for $n \in {\mathbb Z}$, $n \ge 1$.
This follows from (\ref{eq:B2k_zeta}) and the functional equation for
the zeta-function, or directly from a contour integral representation
of the zeta-function~\cite{Titchmarsh}.

From the generating function (\ref{eq:Bgenfn}),
multiplying both sides by $\exp(z) - 1$ and equating coefficients of $z$, 
we obtain the recurrence
\begin{equation}
\sum_{j=0}^k \binom{k+1}{j}\,B_j = 0 \;\text{for}\; k>0.
\label{eq:unstable}
\end{equation}
This recurrence has traditionally been used to compute
$B_0, \ldots, B_{2n}$ with $O(n^2)$ arithmetic operations,
for example in~\cite{Knuth62}.
However, this
is unsatisfactory if floating-point numbers are used, because the
recurrence is {\em numerically unstable}: the relative error in the
computed $B_{2n}$ is of order $4^n \varepsilon$ 
if the floating-point arithmetic
has precision $\varepsilon$, i.e.\ $\lg (1/\varepsilon)$ bits.

Let $C_n$ be defined by (\ref{eq:Cn}). Then, multiplying each side of
(\ref{eq:genC}) by $\sinh(z/2)/(z/2)$
and equating coefficients gives the recurrence
\begin{equation}
\sum_{j=0}^k \frac{C_j}{(2k+1-2j)!\,4^{k-j}} =
\frac{1}{(2k)!\;4^k}\,\raisedot
\label{MCA460}
\end{equation}
Using this recurrence to evaluate $C_0, C_1, \ldots, C_n$, the relative
error in the computed $C_n$ is only $O(n^2 \varepsilon)$, which is satisfactory
from a numerical point of view.

Equation~(\ref{eq:B2k_zeta}) can be used in several ways to compute
Bernoulli numbers.  If we want just one Bernoulli number $B_{2n}$ then
$\zeta(2n)$ on the right-hand-side of~(\ref{eq:B2k_zeta})
can be evaluated to sufficient accuracy using the Euler product:
this is the ``zeta-function'' algorithm for computing Bernoulli numbers
mentioned (with several references to earlier work) 
by Harvey~\cite{Harvey10a}.
On the other hand, if we want several Bernoulli numbers, then we can use
the generating function
\begin{equation}
\frac{\pi z}{\tanh(\pi z)} = -2 \sum_{k=0}^\infty
(-1)^{k}\zeta(2k)z^{2k}\,,
\label{eq:coth-zeta}
\end{equation}
computing the coefficients of $z^{2k}$, $k \le n$, to sufficient accuracy,
as mentioned in~\cite{BBC97,BCEM,BCS}.
This is similar to the fast algorithm that we describe in \S\ref{sec:fast}.
The similarity can be seen more clearly if we replace $\pi z$ by $z$
in~(\ref{eq:coth-zeta}), giving
\begin{equation}
\frac{z}{\tanh(z)} = -2 \sum_{k=0}^\infty
(-1)^{k}\frac{\zeta(2k)}{\pi^{2k}}z^{2k}\,,
\label{eq:coth-zeta2}
\end{equation}
since it is the rational number $\zeta(2n)/\pi^{2n}$ that we need
in order to compute $B_{2n}$ from~(\ref{eq:B2k_zeta}).
In fact, it is easy to see that~(\ref{eq:coth-zeta2}) is equivalent
to~(\ref{eq:genC}).

There is a vast literature on Bernoulli, Tangent and Secant numbers.  For 
example, the bibliography of Dilcher and Slavutskii~\cite{Dilcher-biblio}
contains more than $2000$ items. Thus, we do not attempt to give a complete
list of references to related work. However, we briefly mention the problem
of computing {\em irregular primes}~\cite{BCEMS,Buhler-Harvey},
which are odd primes $p$ such that $p$ divides the class number of
the $p$-th cyclotomic field. The algorithms that we present in
\S\S\ref{sec:fast}--\ref{sec:faster} below are not suitable for this task
because they take too much memory. It is much more space-efficient to use
a modular algorithm where the computations are performed modulo a single
prime (or maybe the product of a small number of primes), as
in~\cite{BCEMS,Buhler-Harvey,Crandall-Pomerance,Harvey10a}.  
Space can also be saved by
the technique of ``multisectioning'', which is described by
Crandall~\cite[\S3.2]{Crandall} and Hare~\cite{Hare}.

\section{Tangent and Secant Numbers}
\label{sec:tangent}
The {\em Tangent numbers} $T_n\;\; (n >0)$ (also called {\em Zag} numbers)
are defined by
\begin{equation*} 
\sum_{n > 0} T_n\, \frac{z^{2n-1}}{(2n-1)!} = \tan z
= \frac{\sin z}{\cos z}\,\raisedot
\end{equation*}
Similarly, the {\em Secant numbers} $S_n\;\; (n \ge 0)$
(also called {\em Euler} or {\em Zig} numbers) are defined by
\begin{equation*} 
\sum_{n \ge 0} S_n\, \frac{z^{2n}}{(2n)!} = \sec z
= \frac{1}{\cos z}\,\raisedot
\end{equation*}
Unlike the Bernoulli numbers, the Tangent and Secant numbers are 
positive integers.
Because $\tan z$ and $\sec z$ have poles at $z = \pi/2$, we expect
$T_n$ to grow roughly like $(2n-1)!(2/\pi)^n$
and $S_n$ like $(2n)!(2/\pi)^n$. 
To obtain more precise estimates, let 
\begin{equation*} \zeta_0(s) = (1 - 2^{-s})\zeta(s) = 1 + 3^{-s} + 5^{-s} + \cdots\end{equation*}
be the {\em odd} zeta-function. Then
\begin{equation}
\frac{T_n}{(2n-1)!} = \frac{2^{2n+1}\zeta_0(2n)}{\pi^{2n}}
 \sim \frac{2^{2n+1}}{\pi^{2n}}
\label{eq:Tk_odd_zeta}
\end{equation}
(this can be proved in the same way as Euler's relation (\ref{eq:B2k_zeta})
for the Bernoulli numbers).
We also have~\cite[(23.2.22)]{AS}
\begin{equation} 
\frac{S_n}{(2n)!} = \frac{2^{2n+2}\beta(2n+1)}{\pi^{2n+1}}
 \sim \frac{2^{2n+2}}{\pi^{2n+1}}\,\raisecomma
\label{eq:Sk_beta}
\end{equation}
where
\begin{equation}
\beta(s) = \sum_{j=0}^\infty (-1)^j (2j+1)^{-s}.
\label{eq:beta}
\end{equation}

From (\ref{eq:B2k_zeta}) and (\ref{eq:Tk_odd_zeta}), we see that
\begin{equation}
T_n = (-1)^{n-1}2^{2n}(2^{2n}-1)\frac{B_{2n}}{2n}\,\raisedot
\label{eq:TkB2k}
\end{equation}
This can also be proved directly, without involving the zeta-function, by using
the identity
\begin{equation*}
\tan z = \frac{1}{\tan z} - \frac{2}{\tan(2z)}\,\raisedot
\end{equation*}

Since $T_n \in {\mathbb{Z}}$, it follows from (\ref{eq:TkB2k}) 
that the odd primes in the denominator of $B_{2n}$
must divide $2^{2n}-1$. This is compatible with the Von Staudt--Clausen
theorem, since $(p-1)|2n$ implies $p|(2^{2n}-1)$ by Fermat's little theorem.

$T_n$ has about $4n$ more bits than $\lceil B_{2n} \rceil$,
but both have $2n \lg n + O(n)$ bits, so asymptotically there is
not much difference between the sizes of $T_n$ and $\lceil B_{2n} \rceil$.
Thus, if our aim is to compute $B_{2n}$, we do not
lose much by first computing $T_n$, and this may be more convenient
since $T_n \in {\mathbb{Z}}$, $B_{2n} \in {\mathbb{Q}}$.

\section{A Fast Algorithm for Bernoulli Numbers}
\label{sec:fast}
Harvey~\cite{Harvey10a} showed how $B_n$ could be computed exactly,
using a modular algorithm and the Chinese remainder theorem, in
$O(n^2(\log n)^{2 + o(1)})$ bit-operations. The same complexity can be 
obtained using (\ref{eq:B2k_zeta}) and the Euler product for the zeta-function
(see the discussion in Harvey~\cite[\S1]{Harvey10a}).

In this section we show how to compute 
{\em all} of $B_0, \ldots, B_n$ with almost the same
complexity bound (only larger by a factor $O(\log n)$).
In \S\ref{sec:faster} we give an even faster algorithm, which
avoids the $O(\log n)$ factor.

Let $A(z) = a_0 + a_1z + a_2z^2 + \cdots$ be a power series with coefficients
in ${\mathbb R}$, with $a_0 \ne 0$.
Let $B(z) = b_0 + b_1z + \cdots$ be the {\em reciprocal} power series,
so $A(z)B(z) = 1$.
Using the FFT, we
can multiply polynomials of degree $n-1$ with
$O(n \log n)$ real operations. 
Using Newton's method~\cite{Kung,Sieveking}, 
we can compute $b_0, \ldots, b_{n-1}$
with the {\em same} complexity $O(n \log n)$,
up to a constant factor.

Taking $A(z) = (\exp(z) - 1)/z$ and working with 
$N$-bit floating-point numbers, where $N = n\lg(n) + O(n)$,
we get $B_0, \ldots, B_n$ to sufficient accuracy
to deduce the exact (rational) result. 
(Alternatively, use~(\ref{eq:genC}) to avoid computing the 
terms with odd subscripts, since these vanish except for $B_1$.)
The work involved is $O(n \log n)$
floating-point operations, each of which can be done with $N$-bit accuracy
in $O(n (\log n)^{2+o(1)})$ bit-operations.
Thus, overall we get $B_0,\ldots, B_n$ with $O(n^2 (\log n)^{3 + o(1)})$
bit-operations. Similarly for Secant and Tangent numbers.
We omit a precise specification of $N$ and a detailed error analysis of 
the algorithm, since it is improved in the following section.

\section{A Faster Algorithm for Tangent and Bernoulli Numbers}
\label{sec:faster}
To improve the algorithm of \S\ref{sec:fast} 
for Bernoulli numbers, we use the
``Kronecker--Sch\"onhage trick''~\cite[\S1.9]{MCA}. 
Instead of working with power series $A(z)$ (or polynomials, which can be
regarded as truncated power series), we work with binary numbers 
$A(z)$ where $z$ is a suitable (negative) power of $2$.

The idea is to compute a single real number ${\cal A}$ which is defined in
such a way that the numbers that we want to compute are encoded in the
binary representation of~${\cal A}$.  For example, consider the series
\[\sum_{k>0} k^2 z^k = \frac{z(1+z)}{(1-z)^3}, \;\; |z| < 1.\] 
The right-hand side is an easily-computed rational function of $z$, say $A(z)$.  
We use decimal rather than binary for expository purposes.
With $z = 10^{-3}$ we easily find
\[A(10^{-3}) = \frac{1001000}{997002999} 
	     = 0.00\underbar{1}\,00\underbar{4}\,00\underbar{9}\,%
0\underbar{16}\,0\underbar{25}\,0\underbar{36}\,0\underbar{49}\,%
0\underbar{64}\,0\underbar{81}\,\underbar{100}\,\cdots\]
Thus, if we are interested in the finite sequence of squares 
$(1^2, 2^2, 3^2, \ldots, 10^2)$,
it is sufficient to compute ${\cal A} = A(10^{-3})$ 
correctly rounded to $30$ decimal
places, and we can then ``read off'' the squares from the decimal
representation of ${\cal A}$.

Of course, this example is purely for illustrative purposes, because it is
easy to compute the sequence of squares directly.  
However, we use the same
idea to compute Tangent numbers. Suppose we want the first $n$ Tangent
numbers $(T_1, T_2, \ldots, T_n)$.  The generating function
\[\tan z = \sum_{k \ge 1} T_k\, \frac{z^{2k-1}}{(2k-1)!} \]
gives us almost what we need, but not quite, because the coefficients
are rationals, not integers.  Instead, consider
\begin{equation}
(2n-1)!\tan z = \sum_{k=1}^n T'_{k,n}\, z^{2k-1} + R_n(z),
\label{eq:tsum}
\end{equation}
where
\begin{equation}
T'_{k,n} = \frac{(2n-1)!}{(2k-1)!}\,T_k			
\label{eq:Tprime}
\end{equation}
is an integer for $1 \le k \le n$,
and
\begin{equation}
R_n(z) = \sum_{k=n+1}^\infty T'_{k,n}\, z^{2k-1} =
  (2n-1)!\sum_{k=n+1}^\infty T_k\, \frac{z^{2k-1}}{(2k-1)!} 	\label{eq:Rn}
\end{equation}
is a remainder term which is small if $z$ is sufficiently small.
Thus, choosing\linebreak 
$z = 2^{-p}$ with $p$ sufficiently large, the first $2np$
binary places of $(2n-1)!\tan z$ define $T'_{1,n}, T'_{2,n},\ldots,T'_{n,n}$.
Once we have computed $T'_{1,n}, T'_{2,n}, \ldots, T'_{n,n}$ it is easy to
deduce $T_1, T_2, \ldots, T_n$ from
\[T_k = \frac{T'_{k,n}}{(2n-1)!/(2k-1)!}\,\raisedot\]

For this idea to work, two conditions must be satisfied.  First, we need
\begin{equation}
0 \le T'_{k,n} < {1}/{z^2} = 2^{2p}, \;\; 1 \le k \le n, \label{eq:condA}
\end{equation}
so we can read off the $T'_{k,n}$ from the binary representation
of $(2n-1)!\tan z\,$.  Since we have a good asymptotic estimate for $T_k$,
it is not hard to choose $p$ sufficiently large for this condition to hold.

Second, we need the remainder term $R_n(z)$ to be sufficiently small that it
does not influence the estimation of $T'_{n,n}$.
A sufficient condition is
\begin{equation}
0 \le R_n(z) < z^{2n-1}.			\label{eq:condB}
\end{equation}
Choosing $z$ sufficiently small (i.e.\ $p$ sufficiently large) 
guarantees that 
condition~(\ref{eq:condB}) holds, since $R_n(z)$ is $O(z^{2n+1})$ as
$z \to 0$ with $n$ fixed.

Lemmas \ref{lemma:3} and \ref{lemma:4} below
give sufficient conditions for
(\ref{eq:condA}) and (\ref{eq:condB}) to hold.

\pagebreak[3]
\begin{lemma}
\label{lemma:1}
\[\frac{T_k}{(2k-1)!} \le \left(\frac{2}{\pi}\right)^{2(k-1)}
  \;\; {\rm for} \;\; k \ge 1.\]
\end{lemma}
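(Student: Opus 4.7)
The plan is to apply the exact identity from equation~(\ref{eq:Tk_odd_zeta}), namely
\[ \frac{T_k}{(2k-1)!} = \frac{2^{2k+1}\zeta_0(2k)}{\pi^{2k}}, \]
and then bound the odd zeta-function $\zeta_0(2k) = 1 + 3^{-2k} + 5^{-2k} + \cdots$ by its value at $k=1$.

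Concretely, I would first observe that $\zeta_0(s)$ is a decreasing function of $s$ on $s > 0$, since each term $(2j+1)^{-s}$ for $j \ge 1$ is strictly decreasing in $s$. Hence for $k \ge 1$ we have $\zeta_0(2k) \le \zeta_0(2)$. Next I would evaluate $\zeta_0(2) = (1 - 2^{-2})\zeta(2) = \tfrac{3}{4} \cdot \tfrac{\pi^2}{6} = \tfrac{\pi^2}{8}$.

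Substituting this bound into the identity above gives
\[ \frac{T_k}{(2k-1)!} \le \frac{2^{2k+1}}{\pi^{2k}} \cdot \frac{\pi^2}{8} = \frac{2^{2k-2}}{\pi^{2k-2}} = \left(\frac{2}{\pi}\right)^{2(k-1)}, \]
which is the desired inequality (with equality at $k=1$, as a sanity check: $T_1/1! = 1 = (2/\pi)^0$).

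There is no real obstacle here: the only mild point is justifying the monotonicity of $\zeta_0$, which is immediate term-by-term. The bulk of the work was done in deriving~(\ref{eq:Tk_odd_zeta}), so the lemma reduces to a one-line zeta estimate.
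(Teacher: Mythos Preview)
Your proof is correct and is essentially identical to the paper's own argument: both invoke identity~(\ref{eq:Tk_odd_zeta}), bound $\zeta_0(2k) \le \zeta_0(2)$, and use $\zeta_0(2) = \pi^2/8$ to reach $(2/\pi)^{2(k-1)}$. You are slightly more explicit about the monotonicity of $\zeta_0$ and the evaluation of $\zeta_0(2)$, but there is no substantive difference.
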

\begin{proof}
From (\ref{eq:Tk_odd_zeta}),
\[\frac{T_k}{(2k-1)!} = 2\left(\frac{2}{\pi}\right)^{2k}\zeta_0(2k)
\le 2\left(\frac{2}{\pi}\right)^{2k}\zeta_0(2) 
\le \left(\frac{2}{\pi}\right)^{2k}\frac{\pi^2}{4}
= \left(\frac{2}{\pi}\right)^{2(k-1)} \qed\]
\end{proof}

\begin{lemma}
\label{lemma:2}
$(2n-1)! \le n^{2n-1}$ for $n \ge 1$. 
\end{lemma}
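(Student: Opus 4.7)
The plan is to invoke the arithmetic-geometric mean inequality on the $2n-1$ positive integers $1, 2, \ldots, 2n-1$ whose product is $(2n-1)!$. Their arithmetic mean is exactly
\[
\frac{1 + 2 + \cdots + (2n-1)}{2n-1} \;=\; \frac{n(2n-1)}{2n-1} \;=\; n,
\]
so AM-GM yields $\bigl((2n-1)!\bigr)^{1/(2n-1)} \le n$, and raising both sides to the power $2n-1$ gives the claim. The case $n=1$ is covered directly (both sides equal $1$), and for $n \ge 2$ AM-GM applies non-trivially.

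I expect essentially no obstacle: the arithmetic mean of the first $2n-1$ positive integers is exactly $n$, matching the right-hand side of the bound perfectly, so AM-GM produces the estimate with no slack that needs to be absorbed. An inductive proof on $n$ would also work but is considerably messier, as the step reduces to comparing $(2n)(2n+1)\,n^{2n-1}$ with $(n+1)^{2n+1}$ and invoking an estimate on $(1+1/n)^{2n+1}$; this obscures the clean reason the bound holds. I would therefore present the AM-GM argument, which is a one-liner once the arithmetic mean is computed.
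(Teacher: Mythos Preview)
Your proof is correct: the arithmetic mean of $1,2,\ldots,2n-1$ is exactly $n$, so AM--GM gives $(2n-1)!^{1/(2n-1)}\le n$, and the case $n=1$ is trivially an equality.

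The paper takes a slightly different (though closely related) route. It pairs the factors of $(2n-1)!$ symmetrically about the middle term~$n$:
\[
(2n-1)! \;=\; n\prod_{j=1}^{n-1}(n-j)(n+j) \;=\; n\prod_{j=1}^{n-1}(n^2-j^2)\;\le\; n\cdot (n^2)^{n-1}\;=\; n^{2n-1}.
\]
One can view this as AM--GM applied to each pair $(n-j,\,n+j)$ separately, whereas you apply it to all $2n-1$ factors at once. Both arguments are one-liners; the paper's version is marginally more self-contained (no named inequality is invoked, only the identity $(n-j)(n+j)=n^2-j^2$), while yours makes transparent \emph{why} the constant $n$ appears on the right-hand side---it is precisely the arithmetic mean of the factors.
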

\begin{proof}
\[(2n-1)! = n\; \prod_{j=1}^{n-1} (n-j)(n+j) 
= n\; \prod_{j=1}^{n-1} (n^2 - j^2) \le n^{2n-1}\]
with equality iff $n = 1$. 	\qed
\end{proof}

\begin{lemma}
\label{lemma:3}
If $k \ge 1$, $n \ge 2$, $p = \lceil n \lg(n) \rceil$, $z = 2^{-p}$, 
and $T'_{k,n}$ is as in {\rm (\ref{eq:Tprime}),} 
then $z \le n^{-n}$ and $T'_{k,n} < 1/z^2$.
\end{lemma}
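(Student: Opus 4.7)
The plan is to prove the two claims in sequence, both by direct calculation chaining the two previous lemmas with the definition of $p$.

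First, for the bound $z \le n^{-n}$, I would simply unwind the definitions: since $p = \lceil n \lg n \rceil \ge n \lg n$, we get
\[
z = 2^{-p} \le 2^{-n \lg n} = n^{-n}.
\]
This step is purely arithmetic and requires no input from the earlier lemmas.

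For the bound $T'_{k,n} < 1/z^2$, the idea is to factor $T'_{k,n}$ as $(2n-1)! \cdot \bigl(T_k/(2k-1)!\bigr)$ using the definition in~(\ref{eq:Tprime}), then apply Lemma~\ref{lemma:1} to bound the second factor by $(2/\pi)^{2(k-1)} \le 1$ (valid for all $k \ge 1$ since $2/\pi < 1$). This reduces the task to showing $(2n-1)! < 1/z^2$. Here I would invoke Lemma~\ref{lemma:2} to get $(2n-1)! \le n^{2n-1} < n^{2n}$, and then combine with the first claim $z \le n^{-n}$ to deduce $n^{2n} \le 1/z^2$, giving the strict inequality $T'_{k,n} < 1/z^2$.

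I do not anticipate a real obstacle here: both ingredients are already in place, and the only care needed is to keep track of which inequalities are strict versus non-strict. The strictness in $n^{2n-1} < n^{2n}$ (valid because $n \ge 2$) is what produces the strict bound required by condition~(\ref{eq:condA}), even in the borderline case where $p = n \lg n$ exactly.
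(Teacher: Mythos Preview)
Your proposal is correct and follows essentially the same route as the paper: unwind the definition of $p$ for the first claim, then chain Lemma~\ref{lemma:1} (to bound $T_k/(2k-1)! \le 1$) and Lemma~\ref{lemma:2} (to bound $(2n-1)! \le n^{2n-1} < n^{2n}$) with the first claim for the second. Your remark that the strict inequality comes from $n^{2n-1} < n^{2n}$ (using $n \ge 2$) is exactly the point the paper relies on, though the paper leaves it implicit.
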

\begin{proof}
We have $z = 2^{-p} = 2^{-\lceil n \lg(n) \rceil} \le 
2^{-n \lg(n)} = n^{-n}$, which proves the first part of the Lemma.

Assume $k \ge 1$ and $n \ge 2$.
From Lemma~\ref{lemma:1}, we have 
\[T'_{k,n} \le (2n-1)!\left(\frac{2}{\pi}\right)^{2(k-1)} 
\le (2n-1)!,\]
and from Lemma~\ref{lemma:2} it follows that
\[T'_{k,n} \le n^{2n-1} < n^{2n}.\]
From the first part of the Lemma, $n^{2n} \le 1/z^2$, so the second part
follows. \qed
\end{proof}

\begin{lemma}
\label{lemma:4}
If $n \ge 2$, $p = \lceil n \lg(n) \rceil$, $z = 2^{-p}$, 
and $R_n(z)$ is as defined in {\rm (\ref{eq:Rn}),}
then\linebreak $0 < R_n(z) < 0.1\,z^{2n-1}$\,.
\end{lemma}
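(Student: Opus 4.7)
The plan is to bound $R_n(z)$ by replacing each coefficient $T_k/(2k-1)!$ with the estimate from Lemma~\ref{lemma:1}, summing the resulting geometric series, and then invoking Lemmas~\ref{lemma:2} and~\ref{lemma:3} to control the remaining factorial and $z$-power.

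The positivity $R_n(z) > 0$ is immediate since all Tangent numbers $T_k$ are positive integers and $z > 0$. For the upper bound, I would start from the definition
\[
R_n(z) = (2n-1)! \sum_{k=n+1}^{\infty} \frac{T_k}{(2k-1)!}\, z^{2k-1},
\]
and apply Lemma~\ref{lemma:1} termwise to get $R_n(z) \le (2n-1)!\, z \sum_{k=n+1}^{\infty} w^{k-1}$, where $w := (2z/\pi)^2$. Summing this geometric tail and dividing by $z^{2n-1}$ yields
\[
\frac{R_n(z)}{z^{2n-1}} \le (2n-1)!\,\Bigl(\frac{2}{\pi}\Bigr)^{\!2n}\frac{z^2}{1-w}.
\]

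Next I would plug in the uniform bounds. By Lemma~\ref{lemma:3}, $z \le n^{-n}$, so $z^2 \le n^{-2n}$; combined with $(2n-1)! \le n^{2n-1}$ from Lemma~\ref{lemma:2}, this gives $(2n-1)!\, z^2 \le 1/n \le 1/2$ for $n \ge 2$. Since $(2/\pi)^{2n}$ is decreasing in $n$, for $n \ge 2$ it is at most $(2/\pi)^4 = 16/\pi^4 < 0.17$. Finally, $w = (2z/\pi)^2 \le (1/(2\pi))^2 < 0.026$ (using $z \le 1/4$), so $1/(1-w) < 1.027$. Multiplying:
\[
\frac{R_n(z)}{z^{2n-1}} \;\le\; \frac{1}{2}\cdot 0.17 \cdot 1.027 \;<\; 0.1,
\]
which is the desired bound.

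There is no real obstacle: the argument is a routine geometric-series estimate, and the only mild delicacy is verifying that the numerical constants actually multiply to something below $0.1$. The case $n = 2$ (where the bounds from Lemmas~\ref{lemma:2} and~\ref{lemma:3} are tightest) is the worst case, and as shown above it still leaves comfortable slack.
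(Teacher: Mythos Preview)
Your proof is correct and follows essentially the same approach as the paper's: bound each term via Lemma~\ref{lemma:1}, sum the resulting geometric series in $w=(2z/\pi)^2$, and then control the remaining factor $(2n-1)!\,z^2$ using Lemmas~\ref{lemma:2} and~\ref{lemma:3}. The only cosmetic difference is that the paper combines the bounds into $(1.026/n)(2/\pi)^{2n}$ and appeals to monotonicity in $n$, whereas you bound each factor separately at $n=2$; both routes land comfortably below $0.1$.
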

\begin{proof}
Since all the terms in the sum defining $R_n(z)$ are positive, it is
immediate that $R_n(z) > 0$.

\pagebreak[3]
Since $n \ge 2$, we have $p \ge 2$ and $z \le 1/4$.  
Now, using Lemma~\ref{lemma:1},
\begin{eqnarray*}
R_n(z) 	&=& \sum_{k=n+1}^\infty T'_{k,n}z^{2k-1}\\
	&\le& (2n-1)!\,\sum_{k=n+1}^\infty 
		\left(\frac{2}{\pi}\right)^{2(k-1)}z^{2k-1}\\
	&\le& (2n-1)!\,\left(\frac{2}{\pi}\right)^{2n}z^{2n+1}
	  \left(1 + \left(\frac{2z}{\pi}\right)^2 +  
		    \left(\frac{2z}{\pi}\right)^4 + \cdots \right)\\
	&\le& (2n-1)!\,\left(\frac{2}{\pi}\right)^{2n}z^{2n+1}
		\left/\left(1 - \left(\frac{2z}{\pi}\right)^2\right)\right.\,.
\end{eqnarray*}
Since $z \le 1/4$, we have $1/(1 - (2z/\pi)^2) < 1.026$.  
Also, from Lemma~\ref{lemma:2},\\ 
$(2n-1)! \le n^{2n-1}$. Thus, we have
\begin{equation*}
\frac{R_n(z)}{z^{2n-1}} < 1.026\,n^{2n-1}\left(\frac{2}{\pi}\right)^{2n}{z^2}.
\end{equation*}
Now $z^2 \le n^{-2n}$ from
the first part of Lemma \ref{lemma:3}, so
\begin{equation}
\frac{R_n(z)}{z^{2n-1}} < \frac{1.026}{n}\left(\frac{2}{\pi}\right)^{2n}.
\label{eq:Rnzineq}
\end{equation}
The right-hand side is a monotonic decreasing function of $n$, so is bounded
above by its value when $n=2$, giving $R_n(z)/z^{2n-1} < 0.1\,$.
\qed
\end{proof}

A high-level description of the resulting Algorithm FastTangentNumbers
is given in Figure~\ref{fig:FastTangentNumbers}.
The algorithm computes the Tangent numbers
$T_1,T_2,\ldots,T_n$ using the Kronecker-Sch\"onhage trick as described above,
and deduces the Bernoulli numbers
$B_2,B_4,\ldots,B_{2n}$ from the relation~(\ref{eq:TkB2k}).
\begin{figure}[h]
\begin{tabbing}
====\====\====\====\====\kill
\>{\bf Input:} integer $n \ge 2$\\
\>{\bf Output:} Tangent numbers $T_1, \ldots, T_n$
	and (optional) Bernoulli numbers $B_2, B_4, \ldots, B_{2n}$\\
\>\> $p \leftarrow \lceil n\lg(n)\rceil$\\
\>\> $z \leftarrow 2^{-p}$\\
\>\> $S \leftarrow \sum_{0 \le k < n}(-1)^{k} z^{2k+1} 
        \times (2n)!/(2k+1)!$\\[2pt]
\>\> $C \leftarrow \sum_{0 \le k < n}(-1)^k z^{2k} \times (2n)!/(2k)!$\\[2pt]
\>\> $V \leftarrow \lfloor z^{1-2n} \times (2n-1)! \times S/C \rceil$ 
 (here $\lfloor x \rceil$ means round $x$ to nearest integer)\\
\>\> Extract $T'_{k,n} = T_k(2n-1)!/(2k-1)!$, $1 \le k \le n$, 
from the binary representation of $V$\\
\>\> $T_k \leftarrow T'_{k,n} \times (2k-1)! / (2n-1)!$, $k = n, n-1, \ldots, 1$\\
\>\> $B_{2k} \leftarrow (-1)^{k-1}(k \times T_k / 2^{2k-1}) / (2^{2k}-1)$,
$k = 1, 2, \ldots, n$ (optional)\\
\>\> {\bf return} $T_1, T_2, \ldots, T_n$ and (optional)
$B_2, B_4, \ldots, B_{2n}$\\
\end{tabbing}
\caption{Algorithm FastTangentNumbers 
(also optionally computes Bernoulli numbers)}
\label{fig:FastTangentNumbers}
\end{figure}

\pagebreak[3]
In order to achieve the best complexity, 
the algorithm must be implemented
carefully using binary arithmetic. 
The computations of $S$ (an approximation to $(2n)!\sin z$)
and $C$ (an approximation to $(2n)!\cos z$) involve
computing ratios of factorials such as $(2n)!/(2k)!$, where
$0 \le k \le n$. This can be done in time
$O(n^2 (\log n)^2)$ by a straightforward algorithm.
The $N$-bit division to compute $S/C$ (an approximation to $\tan z$)
can be done in time $O(N\log(N)\log\log(N))$ 
by the Sch\"onhage--Strassen algorithm combined with 
Newton's method~\cite[\S4.2.2]{MCA}. Here it is sufficient to take
$N = 2np + 2 = 2n^2\lg(n) + O(n)$.
Note that 
\begin{equation}
V = \sum_{k=1}^n 2^{2(n-k)p} T'_{k,n} 		\label{eq:Vexact}
\end{equation}
is just the finite sum in (\ref{eq:tsum}) scaled by $z^{1-2n}$
(a power of two),
and the integers $T'_{k,n}$ can simply be ``read off'' from the binary
representation of~$V$ in $n$ blocks of $2p$ consecutive bits.
The $T'_{k,n}$ can then be scaled by ratios of factorials
in time $O(n^2 (\log n)^{2+o(1)})$
to give the Tangent numbers $T_1, T_2, \ldots, T_n$.

The correctness of the computed Tangent numbers follows from
Lemmas~\ref{lemma:3}--\ref{lemma:4}, apart from possible errors
introduced by $S/C$ being only an approximation
to $\tan(z)$. Lemma~\ref{lemma:5} 
shows that this error is sufficiently small.
\begin{lemma}
Suppose that $n \ge 2$, $z$, $S$ and $C$ as in Algorithm
FastTangentNumbers.  Then
\begin{eqnarray}
z^{1-2n}(2n-1)!\,\left|\frac{S}{C} - \tan z\right| &<& 0.02\,. 
 \label{eq:L5g}
\end{eqnarray}
\label{lemma:5}
\end{lemma}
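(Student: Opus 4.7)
The plan is to express $S/C - \tan z$ in closed form via the Taylor truncation errors, then bound each factor separately using the fact that $n \ge 2$ forces $z \le 1/4$.

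Write $s := (2n)!\sin z$, $c := (2n)!\cos z$, $\varepsilon_S := s - S$, $\varepsilon_C := c - C$, so that $S$ and $C$ are the Taylor truncations of $s$ and $c$ with tails $\varepsilon_S$ and $\varepsilon_C$. A short algebraic manipulation of $S/C - s/c = (Sc - sC)/(Cc)$ yields the identity
\[\frac{S}{C} - \tan z \;=\; \frac{\tan z\cdot \varepsilon_C - \varepsilon_S}{C}.\]
Since $n \ge 2$ implies $p \ge 2$ and hence $z \le 1/4$, both tail series are alternating with strictly decreasing absolute values, and the Leibniz estimate gives $|\varepsilon_C| \le z^{2n}$ and $|\varepsilon_S| \le z^{2n+1}/(2n+1)$.

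Next I would bound the numerator of the error formula. The elementary inequality $\tan z \le 1.022\,z$ on $(0, 1/4]$ (from the Taylor expansion $\tan z = z(1 + z^2/3 + \cdots)$ evaluated at $z = 1/4$), combined with the triangle inequality, gives $|\tan z\cdot \varepsilon_C - \varepsilon_S| \le 1.23\,z^{2n+1}$ for $n \ge 2$. For the denominator, $\cos z \ge \cos(1/4) > 0.968$ and the negligible correction $z^{2n}/(2n)! \le 1/(256\cdot 24)$ give $C \ge 0.968\,(2n)!$ for all $n \ge 2$. Combining the pieces,
\[z^{1-2n}(2n-1)!\,\left|\frac{S}{C} - \tan z\right| \;\le\; \frac{1.23\,z^2}{0.968\cdot 2n}.\]
The right-hand side is decreasing in $n$ along the sequence $z = 2^{-\lceil n\lg n\rceil}$, so it attains its maximum at $n = 2$, $z = 1/4$, where it evaluates to $1.23/(0.968\cdot 64) < 0.02$.

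The main obstacle is quantitative rather than structural: at $n = 2$ the bound is within a few percent of the target $0.02$, so the constants must be kept reasonably sharp (the crude estimates $\tan z \le 2z$ or $C \ge (2n)!/2$ would each push the bound above $0.02$). For every $n \ge 3$, however, $z$ drops to at most $1/32$ and the bound falls by nearly two orders of magnitude, so Lemma~\ref{lemma:5} essentially reduces to a careful arithmetic check in the single borderline case $n = 2$. One could alternatively sharpen the numerator bound by noting that $\varepsilon_S$ and $\tan z\cdot \varepsilon_C$ have the common sign $(-1)^n$, so cancellation reduces their difference to roughly $\tan z \cdot z^{2n}$, but this refinement is not needed here.
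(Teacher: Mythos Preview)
Your argument is correct and lands essentially where the paper does, but the algebraic decomposition you use differs from the paper's. The paper invokes the generic inequality
\[
\left|\frac{A}{B}-\frac{A'}{B'}\right| \le \frac{|A|\,|B-B'|+|B|\,|A-A'|}{|B|\,|B'|}
\]
with $A=\sin z$, $B=\cos z$, $A'=S/(2n)!$, $B'=C/(2n)!$, then bounds $|\sin z|<z$, $|\cos z|>31/32$, $|B'|>31/32$, and the two truncation errors by $z^{2n}/(2n)!$ and $z^{2n+1}/(2n+1)!$, arriving at $|S/C-\tan z|<1.28\,z^{2n+1}/(2n)!$ and hence $z^{1-2n}(2n-1)!\,|S/C-\tan z|<1.28\,z^2/(2n)\le 0.02$. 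You instead use the exact identity $S/C-\tan z=(\tan z\cdot\varepsilon_C-\varepsilon_S)/C$, which puts only $C$ (not also $\cos z$) in the denominator and replaces the separate $\sin z$, $\cos z$ estimates by a direct bound on $\tan z$. The numerics come out almost identical (your $1.23/(0.968\cdot 64)\approx 0.0199$ versus the paper's $1.28/64=0.02$), and both arguments are tight only at the borderline case $n=2$, as you correctly observe. Your identity is arguably cleaner and, as you note, would permit exploiting the common sign of $\varepsilon_S$ and $\tan z\cdot\varepsilon_C$ if a sharper constant were ever needed; the paper's inequality form discards that information.
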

\begin{proof}
We use the inequality
\begin{equation}
\left|\frac{A}{B} - \frac{A'}{B'}\right| \le
\frac{|A|\cdot|B-B'| + |B|\cdot|A-A'|}{|B|\cdot|B'|}\,\raisedot
\label{eq:genineq}
\end{equation}
Take $A = \sin z$, $B = \cos z$, $A' = S/(2n)!$, $B' = C/(2n)!$
in~(\ref{eq:genineq}). Since $n \ge 2$ we have $0 < z \le 1/4$.
Then $|A| = |\sin z| < z$. Also, $|B| = |\cos z| > 31/32$ from the
Taylor series $\cos z = 1 - z^2/2 + \cdots$, 
which has terms of alternating sign
and decreasing magnitude. 
By similar arguments, $|B'| \ge 31/32$,
$|B-B'| < z^{2n}/(2n)!$,
and\\ $|A-A'| < z^{2n+1}/(2n+1)!$.
Combining these inequalities and using~(\ref{eq:genineq}), we obtain
\[
\left|\frac{S}{C} - \tan z\right| < 
\frac{6\cdot 32\cdot 32}{5 \cdot 31 \cdot 31}\frac{z^{2n+1}}{(2n)!} <
\frac{1.28\,z^{2n+1}}{(2n)!}\,\raisedot
\]
Multiplying both sides by $z^{1-2n}(2n-1)!$ and using 
$1.28\,z^2/(2n) \le 0.02$,
we obtain the inequality~(\ref{eq:L5g}).
This completes the proof of Lemma~\ref{lemma:5}.
\qed
\end{proof}

In view of the constant $0.02$ in (\ref{eq:L5g})
and the constant $0.1$ in Lemma~\ref{lemma:4},
the effect of all sources of error in computing 
$z^{1-2n}(2n-1)!\tan z$ is at most $0.12 < 1/2$, which is too small to
change the computed integer $V$, 
that is to say, the computed $V$
is indeed given by~(\ref{eq:Vexact}).

The computation of the Bernoulli numbers
$B_2, B_4, \ldots, B_{2n}$ from $T_1, \ldots, T_n$, is straightforward
(details depending on exactly how rational numbers are to be represented).
The entire computation takes time
\begin{equation*} 
O(N(\log N)^{1+o(1)}) = O(n^2(\log n)^{2+o(1)}).
\end{equation*} 
Thus, we have proved:
\begin{theorem}
The Tangent numbers $T_1, \ldots, T_n$ and
Bernoulli numbers $B_2, B_4, \ldots, B_{2n}$ 
can be computed in $O(n^2(\log n)^{2+o(1)})$ bit-operations
using $O(n^2 \log n)$ space.
\label{thm:tangent}
\end{theorem}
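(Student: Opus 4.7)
The plan is to verify that Algorithm FastTangentNumbers produces correct output within the claimed time and space bounds. The central claim to establish is that the integer $V$ computed in the algorithm equals the exact value $\sum_{k=1}^{n} 2^{2(n-k)p} T'_{k,n}$ of equation~(\ref{eq:Vexact}); once this holds, Lemma~\ref{lemma:3} guarantees that each $T'_{k,n}$ occupies at most $2p$ bits, so the individual $T'_{k,n}$ can be read off as consecutive blocks of the binary expansion of $V$. The Tangent numbers then follow by the exact rational rescaling $T_k = T'_{k,n}(2k-1)!/(2n-1)!$, and the Bernoulli numbers by~(\ref{eq:TkB2k}).

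To justify exactness of $V$, I would bound the deviation of $z^{1-2n}(2n-1)!\,S/C$ from the target sum $\sum_{k=1}^{n} 2^{2(n-k)p}T'_{k,n}$ by less than $1/2$, so that the outer round-to-nearest operation recovers the ideal value. This deviation splits into two contributions: the tail $R_n(z)$ truncated in~(\ref{eq:tsum}), bounded by $0.1$ via Lemma~\ref{lemma:4}, and the error from approximating $\tan z$ by the rational $S/C$, bounded by $0.02$ via Lemma~\ref{lemma:5}. Their sum $0.12 < 1/2$ completes the correctness argument.

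For the complexity, set $N = 2np + 2 = 2n^2 \lg n + O(n)$, so that $S$, $C$, and $V$ are $N$-bit integers. Computing the factorial ratios $(2n)!/(2k)!$ and $(2n)!/(2k+1)!$ incrementally --- each step multiplies an $O(n\log n)$-bit integer by an $O(\log n)$-bit factor --- takes $O(n^2(\log n)^2)$ bit-operations, and assembling $S$ and $C$ from the resulting shifted terms fits in the same bound. The expensive step is the $N$-bit division $S/C$: combining Newton's method with Sch\"onhage--Strassen multiplication, it costs $O(N(\log N)^{1+o(1)}) = O(n^2(\log n)^{2+o(1)})$, which dominates. Extracting the $T'_{k,n}$ from $V$ is purely bitwise, and the final rescalings to produce $T_k$ and $B_{2k}$ are subsumed by the dominant bound. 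Space is controlled by the $N$-bit workspaces for $S$, $C$, $V$ together with the output list, all of which fit in $O(n^2 \log n)$ bits.

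The main obstacle is obtaining the logarithmic factor $(\log n)^{2+o(1)}$ rather than the weaker $(\log n)^{3+o(1)}$ of Section~\ref{sec:fast}. This saving comes from packing the entire computation into a single $N$-bit division via the Kronecker--Sch\"onhage trick, instead of performing $O(n)$ FFT-based arithmetic operations on power series with $O(n \log n)$-bit coefficients. The correctness lemmas are the price paid for this consolidation, and the explicit constants in Lemmas~\ref{lemma:4}--\ref{lemma:5} must be small enough that the combined rounding budget stays safely below $1/2$.
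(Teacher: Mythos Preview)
Your proposal is correct and follows essentially the same approach as the paper: you invoke Lemmas~\ref{lemma:3}--\ref{lemma:5} to show that the total rounding error is at most $0.12 < 1/2$ so that $V$ is exact, and you account for the complexity by bounding the factorial-ratio computations by $O(n^2(\log n)^2)$, the single $N$-bit division by $O(N(\log N)^{1+o(1)}) = O(n^2(\log n)^{2+o(1)})$, and the space by $O(N) = O(n^2\log n)$. This matches the paper's argument in both structure and level of detail.
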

A small modification of the above can be used to compute
the Secant numbers $S_0, S_1, \ldots, S_n$ in
$O(n^2(\log n)^{2+o(1)})$ bit-operations and $O(n^2 \log n)$ space.
The bound on Tangent 
numbers given by Lemma~\ref{lemma:1} can be replaced by the bound
\[\frac{S_n}{(2n)!} \le 2\left(\frac{2}{\pi}\right)^{2n+1}\]
which follows from~(\ref{eq:Sk_beta}) since $\beta(2n+1) < 1$. 

We remark that an efficient implementation of Algorithm
FastTangentNumbers in a high-level language such as 
Sage~\cite{Sage} or Magma~\cite{Magma}
is nontrivial, because it requires access to the internal binary
representation of high-precision integers. 
Everything can be done using (implicitly scaled)
integer arithmetic~-- there is no need for floating-point~-- but for
the sake of clarity we did not
include the scaling in Figure~\ref{fig:FastTangentNumbers}.
If floating-point arithmetic is used, a precision of $N$ bits is sufficient,
where $N = 2np+2$.

Comparing our Algorithm FastTangentNumbers with Harvey's modular
algorithm~\cite{Harvey10a}, we see that there is a space-time trade-off:
Harvey's algorithm uses less space (by a factor of order $n$)
to compute a single $B_n$,
but more time (again by a factor of order $n$) to compute all of
$B_1, \ldots, B_n$. 
Harvey's algorithm has better locality and is readily parallelisable.

In the following section we give much simpler algorithms which are
fast enough for most practical purposes, and are based on three-term
recurrence relations.

\section{Algorithms Based On Three-Term Recurrences}
\label{sec:three_term}
Akiyama and Tanigawa~\cite{Kaneko00}
gave an algorithm for computing Bernoulli numbers based
on a three-term recurrence.  However, it is only useful for exact
computations, since it is numerically unstable if applied using
floating-point arithmetic. It is faster to use a stable
recurrence for computing Tangent numbers, and then deduce the 
Bernoulli numbers from~(\ref{eq:TkB2k}).

\subsection{Bernoulli and Tangent numbers}
We now give a {stable} three-term recurrence and corresponding in-place
algorithm for computing Tangent numbers.  
The algorithm is perfectly stable since
all operations are on positive integers and there is no cancellation.
Also, it involves less arithmetic than the Akiyama-Tanigawa algorithm.
This is partly because the operations are on integers rather than
rationals, and partly because there are fewer operations since we take
advantage of zeros.

Bernoulli numbers can be computed using Algorithm TangentNumbers
and the relation~(\ref{eq:TkB2k}). The time required for the 
application of~(\ref{eq:TkB2k}) is negligible.

The recurrence (\ref{MCA463}) that we use 
was given by Buckholtz and Knuth~\cite{KB67},
but they did not give our in-place Algorithm TangentNumbers explicitly.
Related\linebreak 
recurrences with applications to parallel computation
were considered by Hare~\cite{Hare}.

Write $t = \tan x$, $D = {\rm d}/{\rm d}x$, so $Dt = 1 + t^2$
and $D(t^n) = nt^{n-1}(1+t^2)$ for $n \ge 1$.
It is clear that $D^nt$ is a polynomial in $t$, say $P_n(t)$.
Write $P_n(t) = \sum_{j \ge 0} p_{n,j}t^j$.
Then ${\rm deg}(P_n) = n+1$ and, from the formula for $D(t^n)$,
\begin{equation}
p_{n,j} = (j-1)p_{n-1,j-1} + (j+1)p_{n-1,j+1}.	\label{MCA463}
\end{equation} 
We are interested in 
$T_k = ({\rm d}/{\rm d}x)^{2k-1}\tan x\,\vert_{x=0} 
     = P_{2k-1}(0) = p_{2k-1,0}$, 
which can be computed from the
recurrence (\ref{MCA463}) in $O(k^2)$ operations 
using the obvious boundary conditions.
We save work by noticing that $p_{n,j} = 0$ if $n+j$ is even.
The resulting algorithm is given in
Figure~\ref{fig:TangentNumbers}.

\begin{figure}[h]
\begin{tabbing}
====\====\====\====\====\kill
\>{\bf Input:} positive integer $n$\\
\>{\bf Output:} Tangent numbers $T_1, \ldots, T_n$\\
\>\> $T_1 \leftarrow 1$\\
\>\> {\bf for} {$k$ {\bf from} $2$ {\bf to} $n$}\\
\>\>\>  $T_k \leftarrow (k-1) T_{k-1}$\\
\>\> {\bf for} {$k$ {\bf from} $2$ {\bf to} $n$}\\
\>\>\> {\bf for} {$j$ {\bf from} $k$ {\bf to} $n$}\\
\>\>\>\> $T_j \leftarrow (j-k) T_{j-1} + (j-k+2) T_j$\\ 
\>\> {\bf return} $T_1, T_2, \ldots, T_n$.\\
\end{tabbing}
\caption{Algorithm TangentNumbers}
\label{fig:TangentNumbers}
\end{figure}

The first {\bf for} loop initializes $T_k = p_{k-1,k} = (k-1)!$. The variable
$T_k$ is then used to store $p_{k,k-1}$, $p_{k+1,k-2}$, $\ldots$, 
$p_{2k-2,1}$, $p_{2k-1,0}$ at successive iterations of the second {\bf for}
loop.
Thus, when the algorithm terminates, $T_k = p_{2k-1,0}$, as expected.

\pagebreak[3]
The process in the case $n=3$ 
is illustrated in Figure~\ref{fig:process}, where $T_k^{(m)}$
denotes the value of the variable $T_k$ at successive iterations
$m = 1, 2, \ldots, n$. It is instructive to compare a similar Figure for the 
Akiyama-Tanigawa algorithm in~\cite{Kaneko00}.
\begin{figure}[h]
\begin{equation*} 
\begin{array}{ccccccc}
&& T_1^{(1)} = p_{0,1}\\
& \swarrow && \searrow\\
T_1^{(1)} = p_{1,0} &&&& T_2^{(1)} = p_{1,2}\\
& \searrow && \swarrow && \searrow\\
&& T_2^{(2)} = p_{2,1} &&&& T_3^{(1)} = p_{2,3}\\	
& \swarrow && \searrow && \swarrow\\
T_2^{(2)} = p_{3,0} &&&& T_3^{(2)} = p_{3,2}\\
& \searrow && \swarrow\\
&& T_3^{(3)} = p_{4,1}\\
& \swarrow\\
T_3^{(3)} = p_{5,0}\\
\end{array}
\end{equation*}
\caption{Dataflow in Algorithm TangentNumbers for $n = 3$}
\label{fig:process}
\end{figure}

Algorithm TangentNumbers takes $\Theta(n^2)$ operations on positive
integers.  The integers $T_n$ have $O(n \log n)$ bits, other
integers have $O(\log n)$ bits.
Thus, the overall complexity is 
$O(n^3 (\log n)^{1+o(1)})$ bit-operations,
or $O(n^3 \log n)$ word-operations if $n$ fits in a single word.

The algorithm is not optimal, but it is good in practice 
for moderate values of $n$, and much simpler than 
asymptotically faster algorithms such as those described in 
\S\S\ref{sec:fast}--\ref{sec:faster}.
For example, using a straightforward Magma implementation of 
Algorithm TangentNumbers,
we computed the first $1000$ Tangent numbers in 1.50 sec
on a 2.26 GHz Intel Core 2 Duo.
For comparison, it takes 1.92 sec for a single $N$-bit
division computing $T$ in Algorithm FastTangentNumbers
(where $N = 19931568$ corresponds to $n=1000$).
Thus, we expect the crossover point where Algorithm FastTangentNumbers
actually becomes faster to be slightly larger than $n=1000$
(but dependent on implementation details).

\subsection{Secant numbers}

A similar algorithm may be used to compute Secant numbers.
Let $s = \sec x$, $t = \tan x$,
and $D = {\rm d}/{\rm d}x$. Then $Ds = st$,
$D^2 s = s(1 + 2t^2)$, and in general
$D^n s = sQ_n(t)$, where $Q_n(t)$ is a polynomial of degree~$n$ in $t$.
The Secant numbers are given by $S_k = Q_{2k}(0)$.
Let $Q_n(t) = \sum_{k \ge 0} q_{n,k}t^k$. From
\[D(st^k) = st^{k+1} + kst^{k-1}(1+t^2)\] 
we obtain the three-term recurrence
\begin{equation}
q_{n+1,k} = kq_{n,k-1} + (k+1)q_{n,k+1} \;\;{\rm for}\;\; 1 \le k \le n.
\label{eq:secant3}
\end{equation}
By avoiding the computation of terms $q_{n,k}$ that are known to be zero
($n+k$ odd),
and ordering the computation in a manner analogous to that used
for Algorithm TangentNumbers,
we obtain Algorithm SecantNumbers (see Figure~\ref{fig:SecantNumbers}),
which computes 
the Secant numbers in place using non-negative integer arithmetic.
\begin{figure}[ht]
\begin{tabbing}
====\====\====\====\====\kill
\>{\bf Input:} positive integer $n$\\
\>{\bf Output:} Secant numbers $S_0, S_1 \ldots, S_n$\\
\>\> $S_0 \leftarrow 1$\\
\>\> {\bf for} {$k$ {\bf from} $1$ {\bf to} $n$}\\
\>\>\>  $S_k \leftarrow k S_{k-1}$\\
\>\> {\bf for} {$k$ {\bf from} $1$ {\bf to} $n$}\\
\>\>\> {\bf for} {$j$ {\bf from} $k+1$ {\bf to} $n$}\\
\>\>\>\> $S_j \leftarrow (j-k) S_{j-1} + (j-k+1) S_j$\\ 
\>\> {\bf return} $S_0, S_1, \ldots, S_n$.\\
\end{tabbing}
\caption{Algorithm SecantNumbers}
\label{fig:SecantNumbers}
\end{figure}

\subsection{Comparison with Atkinson's algorithm}

Atkinson~\cite{Atkinson} gave an elegant algorithm for computing both
the Tangent numbers $T_1, T_2, \ldots, T_n$ and
the Secant numbers $S_0, S_1, \ldots, S_n$ using a ``Pascal's triangle''
style of algorithm that only involves additions of non-negative integers.
Since a triangle with $2n+1$ rows in involved, Atkinson's algorithm
requires $2n^2 + O(n)$ integer additions.  This can be compared with
$n^2/2 + O(n)$ additions and $n^2 + O(n)$ multiplications (by small 
integers) for our Algorithm TangentNumbers, and similarly for
Algorithm SecantNumbers.

Thus, we might expect Atkinson's algorithm to be slower than Algorithm
TangentNumbers. Computational
experiments confirm this.  With $n = 1000$, Algorithm TangentNumbers
programmed in Magma
takes $1.50$ seconds on a 2.26 GHz Intel Core 2 Duo, 
algorithm SecantNumbers also takes $1.50$ seconds,
and
Atkinson's algorithm takes $4.51$ seconds. Thus, even if both Tangent and
Secant numbers are required, Atkinson's algorithm is slightly slower.
It also requires about twice as much memory.

\pagebreak[3]
\begin{acknowledgement}
We thank Jon Borwein
for encouraging the belief that high-precision
computations are useful in ``experimental'' mathematics~\cite{Borwein99},
e.g.\ in the PSLQ algorithm~\cite{Ferguson}.
Ben F. ``Tex'' Logan, Jr.~\cite{Logan} suggested the use of Tangent numbers
to compute Bernoulli numbers.
Christian Reinsch~\cite{Reinsch79} 
pointed out the numerical instability of
the recurrence~(\ref{eq:unstable})
and suggested the use of the numerically stable recurrence~(\ref{MCA460}).
Christopher Heckman 
kindly drew our attention to Atkinson's algorithm~\cite{Atkinson}.
We thank Paul Zimmermann for his comments.
Some of the material presented here
is drawn from the recent book
{\em Modern Computer Arithmetic}~\cite{MCA} 
(and as-yet-unpublished solutions to exercises in the book).
In particular, see \cite[\S4.7.2 and exercises 4.35--4.41]{MCA}.
Finally, we thank David Bailey, Richard Crandall, and two anonymous referees
for suggestions and pointers to additional references.

\end{acknowledgement}

\pagebreak[4]
\bibliographystyle{}
\bibliography{}

\begin{thebibliography}{99.}

\bibitem{AS}
Milton Abramowitz and Irene A. Stegun,
{\em Handbook of Mathematical Functions},
Dover, 1973.

\bibitem{Atkinson}
M. D. Atkinson, 
How to compute the series expansions of $\sec\ x$ and $\tan\ x$,
{\em Amer.\ Math.\ Monthly} {\bf 93} (1986), 387--389.  

\bibitem{BBC97}
David H. Bailey, Jonathan M. Borwein and Richard E. Crandall,
On the Khintchine constant,
{\em Math.\ Comput.} {\bf 66} (1997), 417--431.

\bibitem{Borwein99}
Jonathan M. Borwein and R. M. Corless,
Emerging tools for experimental mathematics, 
{\em Am.\ Math.\ Mon.} {\bf 106} (1999), 899--909.

\bibitem{Magma}
Wieb Bosma, John Cannon and Catherine Playoust, 
The Magma algebra system. I. The user language, 
{\em J. Symbolic Comput.} {\bf 24} (1997), 235-–265

\bibitem{rpb052}
Richard P. Brent, Unrestricted algorithms for elementary and special
functions, in {\em Information Processing 80},
North-Holland, Amsterdam, 1980, 613--619. arXiv:1004.3621v1

\bibitem{MCA}
Richard P. Brent and Paul Zimmermann,
{\em Modern Computer Arithmetic},
Cambridge University Press, 2010, 237 pp.
arXiv:1004.4710v1

\bibitem{BCEMS}
Joe Buhler, Richard Crandall, Reijo Ernvall, Tauno Mets\"ankyl\"a and 
M. Amin Shokrollahi,
Irregular primes and cyclotomic invariants to twelve million, 
{\em J.\ Symbolic Computation} {\bf 31} (2001), 89--96.

\bibitem{BCEM}
Joe Buhler, Richard Crandall, Reijo Ernvall and Tauno Mets\"ankyl\"a, 
Irregular primes to four million, 
{\em Math.\ Comput.} {\bf 61} (1993), 151--153.

\bibitem{BCS}
Joe Buhler, Richard Crandall and R. Sompolski,
Irregular primes to one million, 
{\em Math.\ Comput.} {\bf 59} (1992), 717--722. 

\bibitem{Buhler-Harvey}
Joe Buhler and David Harvey,
Irregular primes to $163$ million,
{\em Math.\ Comput.} {\bf 80} (2011), 2435--2444.

\bibitem{Clausen}
Thomas Clausen, 
Theorem, 
{\em Astron.\ Nachr.} {\bf 17} (1840), cols.\ 351--352.

\bibitem{Crandall}
Richard E. Crandall,
{\em Topics in Advanced Scientific Computation},
Springer-Verlag, 1996. 

\bibitem{Crandall-Pomerance}
Richard E. Crandall and Carl Pomerance,
{\em Prime numbers: A Computational Perspective}, 
Springer-Verlag, 2001.

\bibitem{Dilcher-biblio}
Karl Dilcher and Ilja Sh. Slavutskii, 
{\em A Bibliography of Bernoulli Numbers} (last updated March 3, 2007),
\url{http://www.mscs.dal.ca/\%7Edilcher/bernoulli.html}. 

\bibitem{Ferguson}
Helaman R.\ P.\ Ferguson, David H.\ Bailey and Steve Arno,
Analysis of PSLQ, an integer relation finding algorithm,
{\em Math.\ Comput.} {\bf 68} (1999), 351--369.

\bibitem{Furer}
Martin F\"urer,
Faster integer multiplication,
{\em Proc.\ 39th Annual ACM Symposium on Theory of Computing (STOC),}
ACM, San Diego, California, 2007, 57--66.

\bibitem{GKP94}
Ronald L. Graham, Donald E. Knuth, and Oren Patashnik, 
{\em Concrete Mathematics}, third edition, Addison-Wesley, 1994.

\bibitem{Hare}
Kevin Hare,
{\em Multisectioning, Rational Poly-Exponential Functions and 
Parallel Computation}, M.Sc.\ thesis, Dept.\ of Mathematics and
Statistics, Simon Fraser University, Canada, 2002.

\bibitem{Harvey10a}
David Harvey, A multimodular algorithm for computing Bernoulli numbers, 
{\em Math.\ Comput.} {\bf 79} (2010), 2361--2370.

\bibitem{Kaneko00}
Masanobu Kaneko, The Akiyama-Tanigawa algorithm for Bernoulli
numbers, {\em J.\ of Integer Sequences} {\bf 3} (2000). 
Article 00.2.9, 6 pages.
\url{http://www.cs.uwaterloo.ca/journals/JIS/}.

\bibitem{Knuth62}
Donald E. Knuth, Euler's constant to 1271 places, 
{\em Math.\ Comput.} {\bf 16} (1962), 275--281.

\bibitem{KB67}
Donald E. Knuth and Thomas J. Buckholtz,
Computation of Tangent, Euler, and Bernoulli numbers,
{\em Math.\ Comput.} {\bf 21} (1967), 663--688.  

\bibitem{Kung}
H. T. Kung, 
On computing reciprocals of power series,
{\em Numer.\ Math.} {\bf 22} (1974), 341--348.

\bibitem{Logan}
B.\ F.\ Logan, unpublished observation, mentioned
in~\cite[\S6.5]{GKP94}.

\bibitem{Reinsch79}
Christian Reinsch, personal communication to R.~P.~Brent, about 1979,
acknowledged in~\cite{rpb052}.

\bibitem{SS}
Arnold Sch\"onhage and Volker Strassen,
Schnelle Multiplikation gro{\ss}er Zahlen,
{\em Computing} {\bf 7} (1971), 281--292.

\bibitem{Sieveking}
Malte Sieveking,
An algorithm for division of power series,
{\em Computing} {\bf 10} (1972), 153--156.

\bibitem{Sloane}
Neil J.\ A.\  Sloane, 
{\em The On-Line Encyclopedia of Integer Sequences,}
\url{http://oeis.org}.

\bibitem{Staudt}
Karl G. C. von Staudt, 
Beweis eines Lehrsatzes, die Bernoullischen Zahlen betreffend,
{\em J.\ Reine Angew.\ Math.} {\bf 21} (1840), 372--374.
\url{http://gdz.sub.uni-goettingen.de}.

\bibitem{Steel}
Allan Steel,
Reduce everything to multiplication,
presented at {\em Computing by the Numbers: Algorithms, Precision
and Complexity}, Workshop for Richard Brent's 60th birthday, 
Berlin, 2006,
\url{http://www.mathematik.hu-berlin.de/\%7Egaggle/EVENTS/2006/BRENT60/}.

\bibitem{Sage}
William Stein {\em et al},
{\em Sage,}
\url{http://www.sagemath.org/}.

\bibitem{Titchmarsh}
Edward C. Titchmarsh,
{\em The Theory of the Riemann Zeta-Function},
second edition (revised by D. R. Heath-Brown),
Clarendon Press, Oxford, 1986.

\bigskip
\end{thebibliography}

\end{document}